\newtheorem{theorem}{Theorem}
\newtheorem{corollary}[theorem]{Corollary}
\newtheorem{lemma}[theorem]{Lemma}
\newtheorem{definition}[theorem]{Definition}
\title{A Digital Binomial Theorem for Sheffer Sequences}
\author{Toufik Mansour}
\author{Hieu D. Nguyen}
    \address{Department of Mathematics, University of Haifa, 31905 Haifa, Israel}
    \email{toufik@math.haifa.ac.il}
    \address{Department of Mathematics, Rowan University, Glassboro, NJ, USA}
    \email{nguyen@rowan.edu}
\date{10-28-2015}
\begin{document}

\subjclass[2010]{Primary 11B65; Secondary 28A80 }
\keywords{binomial theorem, Sierpi\'nski matrix, sum-of-digits, Sheffer polynomial sequence}

\begin{abstract}

We extend the digital binomial theorem to Sheffer polynomial sequences by demonstrating that their corresponding Sierpi\'nski matrices satisfy a multiplication property that is equivalent to the convolution identity for Sheffer sequences.

\end{abstract}

\maketitle

\section{Introduction}
\label{sec:1}

The binomial theorem is a fundamental result in mathematics:
\begin{equation} \label{eq:binomial-theorem}
(x+y)^n=\sum_{k=0}^n \binom{n}{k}x^k y^{n-k}.
\end{equation}
One generalization of the binomial theorem, due to Callan \cite{C} (see also \cite{N1}), expresses the exponents appearing in (\ref{eq:binomial-theorem}) in terms of the binary sum-of-digits function $s(m)$:
\begin{equation} \label{eq:digital-binomial-theorem-carry-free}
(x+y)^{s(n)}  = \sum_{\substack{0\leq m \leq n \\  (m,n-m) \ 
\text{\rm carry-free}}}x^{s(m)}y^{s(n-m)},
\end{equation}
where a pair of non-negative integers $(j,k)$ is said to be carry-free if their sum in binary involves no carries.  We refer to (\ref{eq:digital-binomial-theorem-carry-free}) as the digital binomial theorem.

Several extensions of the digital binomial theorem have recently been found.  For example, a non-binary version is given in \cite{N2} for any integer base $b>2$:
\begin{equation} \label{eq:digital-binomial-theorem-non-binary}
\prod_{i=0}^{N-1}\binom{x+y+n_i-1}{n_i} =\sum_{0\leq m\preceq_b n}\left( \prod_{i=0}^{N-1}\binom{x+m_i-1}{m_i} \prod_{i=0}^{N-1}\binom{y+n_i-m_i-1}{n_i-m_i} \right).
\end{equation}
Here, $n_i$ and $m_i$ denote the $i$-th digit in the base-$b$ expansion of $n$ and $m$, respectively.  Also, $m\preceq_b n$ denotes the fact that $m_i\leq n_i$ for all $i$.   Another example is a $q$-analog given in \cite{MN}:
\begin{equation} \label{eq:q-digital-binomial-theorem}
 \prod_{i=0}^{N-1}\binom{x+q^i y+ n_i-1}{n_i} = \sum_{\substack{0\leq m \leq n \\  (m,n-m) \
\text{\rm carry-free}}}q^{z_{n}(m)}x^{s(m)}y^{s(n-m)}.
\end{equation}

 In this paper, we present a digital binomial theorem for Sheffer sequences and those of binomial type by considering a polynomial generalization of (\ref{eq:binomial-theorem}).  Let $f(t)$ be a delta series and $g(t)$ be an invertible series, i.e.,
 \begin{align*}
 f(t) & =\sum_{k=0}^{\infty}a_kt^k \qquad (a_0=0,a_1\neq 0), \\
 g(t) & = \sum_{k=0}^{\infty} b_kt^k \qquad (b_0\neq 0).
 \end{align*}
The polynomial sequence $s_n(x)$, $n=0,1,\dots$, is said to be Sheffer for $(g(t),f(t))$ if it has generating function  (\cite[Theorem 2.3.4, p. 18]{R})
\begin{equation} \label{eq:sheffer}
\frac{1}{g(\bar{f}(t))}e^{x\bar{f}(t)} = \sum_{n=0}^{\infty} \frac{s_n(x)}{n!}t^n
\end{equation}
where $\bar{f}(t)$ is the compositional inverse of $f(t)$.
The Sheffer sequence $s_n(x)$ is known to satisfy the convolution identity (\cite[Theorem 2.3.9, p. 21]{R})
\begin{equation}\label{eq:sheffer-sequence}
s_n(x+y)=\sum_{k=0}^n \binom{n}{k}p_k(x)s_{n-k}(y).
\end{equation}
Here, $p_n(x)$ is the polynomial sequence associated to $s_n(x)$, i.e., $p_n(x)$ has generating function
\[
e^{x\bar{f}(t)} = \sum_{n=0}^{\infty} \frac{p_n(x)}{n!}t^n.
\]  
It is known that $p_n(x)$ is of binomial type, i.e., $p_n(x)$ satisfies the binomial identity ((\cite[Theorem 2.4.7, p. 26]{R})
\begin{equation}\label{eq:polynomial-binomial-type}
p_n(x+y)=\sum_{k=0}^n \binom{n}{k}p_k(x)p_{n-k}(y).
\end{equation}

In the special case where $\bar{f}(t)=t$ in (\ref{eq:sheffer}) so that $p_n(x)=x^n$, the Sheffer sequence $s_n(x)$ is then called an Appell sequence.  There are many well-known examples of Sheffer sequences, e.g.,
the Bernoulli, Hermite (probabilistic version), and Laguerre polynomials defined by the generating functions
\[
\frac{t}{e^t-1}\exp(xt) = \sum_{n=0}^{\infty} \frac{B_n(x)}{n!}t^n,
\]
\[
\exp(xt-t^2/2) = \sum_{n=0}^{\infty} \frac{H_n(x)}{n!}t^n,
\]
and
\[
\frac{1}{(1-t)^{\alpha+1}}\exp\left(-\frac{xt}{1-t}\right) = \sum_{n=0}^{\infty} \frac{L_n^{\alpha}(x)}{n!}t^n,
\]
respectively.  Observe that $B_n(x)$ and $H_n(x)$ are both Appell sequences.  It is well known that all three Sheffer sequences satisfy the convolution identities
\[
B_n(x+y)=\sum_{k=0}^n \binom{n}{k}x^kB_{n-k}(y),
\]
\[
H_n(x+y)=\sum_{k=0}^n \binom{n}{k}x^kH_{n-k}(y),
\]
and
\[
L^{\alpha}_n(x+y)=\sum_{k=0}^n L^{-1}_k(x)L^{\alpha}_{n-k}(y),
\]
respectively.  These polynomials also have extensions that are also Sheffer sequences, e.g., Bernoulli polynomials of higher order \cite{KKRD} and generalized Hermite polynomials \cite{S}.

If we renormalize a Sheffer sequence $s_n(x)$ and its associated sequence $p_n(x)$ by defining $\bar{s}_n(x)=s_n(x)/n!$ and $\bar{p}_n(x)=p_n(x)/n!$, then (\ref{eq:sheffer-sequence}) and (\ref{eq:polynomial-binomial-type}) are equivalent to
\begin{equation} \label{eq:sheffer-sequence-normalized}
\bar{s}_n(x+y)=\sum_{k=0}^n \bar{p}_k(x)\bar{s}_{n-k}(y)
\end{equation}
and
\begin{equation} \label{eq:polynomial-binomial-type-normalized}
\bar{p}_n(x+y)=\sum_{k=0}^n \bar{p}_k(x)\bar{p}_{n-k}(y),
\end{equation}
respectively.  
Identities (\ref{eq:sheffer-sequence-normalized}) and (\ref{eq:polynomial-binomial-type-normalized}) form the basis for our main result.

\begin{theorem} \label{th:polynomial-digital-binomial-theorem}
 Let $n$ be a non-negative integer with base $b$ expansion $n=n_{N-1}b^{N-1}+\cdots +n_0b^0$ and $\{s_n(x)\}$ be a Sheffer polynomial sequence with associated sequence $\{p_n(x)\}$.  Then
\begin{equation} \label{eq:digital-binomial-theorem-sheffer-sequence}
 \prod_{i=0}^{N-1}\bar{s}_{n_i}(x_i+y_i) = \sum_{0\leq m\preceq_b n}\left(\prod_{i=0}^{N-1}\bar{p}_{m_i}(x_i) \prod_{i=0}^{N-1}\bar{s}_{n_i-m_i}(y_i) \right)
\end{equation}
and
\begin{equation} \label{eq:digital-binomial-theorem-binomial-type}
 \prod_{i=0}^{N-1}\bar{p}_{n_i}(x_i+y_i) = \sum_{0\leq m\preceq_b n}\left(\prod_{i=0}^{N-1}\bar{p}_{m_i}(x_i) \prod_{i=0}^{N-1}\bar{p}_{n_i-m_i}(y_i) \right),
\end{equation}
where $m=m_{N-1}b^{N-1}+\cdots +m_0b^0$.
\end{theorem}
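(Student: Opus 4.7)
The plan is to apply the normalized convolution identities (\ref{eq:sheffer-sequence-normalized}) and (\ref{eq:polynomial-binomial-type-normalized}) factor by factor on the left-hand side of (\ref{eq:digital-binomial-theorem-sheffer-sequence}) and (\ref{eq:digital-binomial-theorem-binomial-type}), and then recognize the resulting tuple sum as a sum over integers $m$ with $0\leq m\preceq_b n$ via the base-$b$ bijection. No machinery beyond the Sheffer convolution identity itself is required.

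First I would fix $n$ with digits $n_0,\dots,n_{N-1}\in\{0,1,\dots,b-1\}$ and apply (\ref{eq:sheffer-sequence-normalized}) to each digit factor to get, for every $i$,
\[
\bar{s}_{n_i}(x_i+y_i)=\sum_{m_i=0}^{n_i}\bar{p}_{m_i}(x_i)\bar{s}_{n_i-m_i}(y_i).
\]
Taking the product over $i=0,\dots,N-1$ and expanding (a finite distributive-law step),
\[
\prod_{i=0}^{N-1}\bar{s}_{n_i}(x_i+y_i)=\sum_{(m_0,\dots,m_{N-1})}\ \prod_{i=0}^{N-1}\bar{p}_{m_i}(x_i)\bar{s}_{n_i-m_i}(y_i),
\]
where the tuple sum ranges over all $(m_0,\dots,m_{N-1})$ with $0\leq m_i\leq n_i$ for each $i$.

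The only remaining step is to reindex. Since each $n_i<b$, the digits $m_i$ of any admissible tuple also satisfy $0\leq m_i<b$, so the map $(m_0,\dots,m_{N-1})\mapsto m=\sum_{i=0}^{N-1}m_i b^i$ is a bijection between these tuples and the integers $m$ with $0\leq m\preceq_b n$. Substituting this reindexing gives (\ref{eq:digital-binomial-theorem-sheffer-sequence}). Identity (\ref{eq:digital-binomial-theorem-binomial-type}) is obtained by exactly the same argument with (\ref{eq:polynomial-binomial-type-normalized}) replacing (\ref{eq:sheffer-sequence-normalized}), i.e., $\bar{s}$ replaced by $\bar{p}$ throughout.

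The argument is essentially bookkeeping, so no step should be a serious obstacle. If anything, the one point that deserves explicit mention is the bijection between digit tuples and integers $m\preceq_b n$, since without the bound $n_i<b$ the reindexing would not be unique; this is why the hypothesis that $n_i$ are genuine base-$b$ digits (implicit in the statement) is essential.
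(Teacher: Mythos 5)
Your proof is correct, but it takes a genuinely different and more elementary route than the paper. You expand each digit factor $\bar{s}_{n_i}(x_i+y_i)$ directly via the normalized convolution identity (\ref{eq:sheffer-sequence-normalized}), distribute the product over the $N$ resulting sums, and then identify the tuple sum over $(m_0,\dots,m_{N-1})$ with $0\leq m_i\leq n_i$ with the sum over integers $0\leq m\preceq_b n$; your closing remark that the bound $n_i<b$ is what makes this reindexing a bijection is exactly the right point to flag. The paper instead develops the generalized Sierpi\'nski matrices $S_{b,N}(\mathbf{x}_N)$ and $P_{b,N}(\mathbf{x}_N)$ of Definition \ref{de:1}, proves the Kronecker-product recurrence of Lemma \ref{le:multivariable}, establishes the multiplicative property $P_{b,N}(\mathbf{x}_N)S_{b,N}(\mathbf{y}_N)=S_{b,N}(\mathbf{x}_N+\mathbf{y}_N)$ of Theorem \ref{th:multplicative} by induction on $N$ (using the mixed-product property of the Kronecker product), and finally reads off the theorem by equating the $(n,0)$ entries of that matrix identity. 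The two arguments use the same single input, namely the convolution identity (\ref{eq:sheffer-sequence}), and in fact the induction in the paper's proof of Theorem \ref{th:multplicative} is doing digit by digit essentially what your distributive-law step does all at once. What the paper's longer route buys is the matrix identity itself, valid at every entry $(j,k)$ rather than only at $(n,0)$, which is the structural ``equivalence with the Sierpi\'nski matrix multiplication property'' advertised in the abstract; what your route buys is brevity and transparency, at the cost of not exhibiting that matrix-level structure.
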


In the case where  $x_0=\ldots = x_{N-1}$ and $y_0=\ldots = y_{N-1}$, we obtain as a corollary the following result:

\begin{corollary} \label{co:polynomial-digital-binomial-theorem}
For $n=b^N-1$, we have
\begin{equation} \label{eq:polynomial-digital-binomial-theorem}
\bar{s}_{b-1}(x+y)^N= \sum_{0\leq m\leq n}\left(\prod_{i=0}^{N-1}\bar{p}_{m_i}(x) \prod_{i=0}^{N-1}\bar{s}_{b-1-m_i}(y) \right)
\end{equation}
and
\begin{equation}
\bar{p}_{b-1}(x+y)^N= \sum_{0\leq m\leq n}\left(\prod_{i=0}^{N-1}\bar{p}_{m_i}(x) \prod_{i=0}^{N-1}\bar{p}_{b-1-m_i}(y) \right)
\end{equation}
\end{corollary}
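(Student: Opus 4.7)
The plan is to derive Corollary~\ref{co:polynomial-digital-binomial-theorem} as a direct specialization of Theorem~\ref{th:polynomial-digital-binomial-theorem}, which we are allowed to invoke. The only real content is checking that the two simplifications built into the hypotheses of the corollary, namely (i) all $x_i$ coincide and all $y_i$ coincide, and (ii) $n = b^N - 1$, together collapse the general identity to the stated form.

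First, I would specialize the variables. Setting $x_0 = x_1 = \cdots = x_{N-1} = x$ and $y_0 = y_1 = \cdots = y_{N-1} = y$ in~(\ref{eq:digital-binomial-theorem-sheffer-sequence}) makes every factor in the product on the left equal to $\bar{s}_{n_i}(x+y)$, and likewise organizes the right-hand side as $\prod_i \bar{p}_{m_i}(x) \prod_i \bar{s}_{n_i - m_i}(y)$ where only the digit vectors $(m_0,\dots,m_{N-1})$ and $(n_0,\dots,n_{N-1})$ still distinguish the summands.

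Next I would plug in $n = b^N - 1$. Since the base-$b$ expansion of this $n$ has $n_i = b-1$ for every $i \in \{0, \dots, N-1\}$, the left-hand side of~(\ref{eq:digital-binomial-theorem-sheffer-sequence}) becomes $\prod_{i=0}^{N-1}\bar{s}_{b-1}(x+y) = \bar{s}_{b-1}(x+y)^{N}$, matching the left-hand side of~(\ref{eq:polynomial-digital-binomial-theorem}). The crucial observation on the right is that the digital order condition $m \preceq_b n$ reduces to the ordinary inequality: because each $n_i = b-1$ is already the maximum possible base-$b$ digit, the constraint $m_i \leq n_i$ holds automatically for \emph{every} integer $m$ with $0 \leq m \leq n$, so the index set $\{0 \leq m \preceq_b n\}$ coincides with $\{0 \leq m \leq n\}$. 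Substituting $n_i = b-1$ into $\bar{s}_{n_i - m_i}(y)$ turns the second product into $\prod_i \bar{s}_{b-1-m_i}(y)$, giving exactly~(\ref{eq:polynomial-digital-binomial-theorem}). The identity for $\bar{p}_{b-1}(x+y)^N$ follows by applying the same specialization to~(\ref{eq:digital-binomial-theorem-binomial-type}).

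There is no real obstacle here, since the corollary is essentially a bookkeeping consequence of Theorem~\ref{th:polynomial-digital-binomial-theorem}; the one point worth being careful about is the equivalence of $m \preceq_b n$ with $0 \leq m \leq n$ in the maximal-digit case, which I would state explicitly so the reader sees why the sum on the right is unrestricted.
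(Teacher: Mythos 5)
Your proposal is correct and matches the paper's (implicit) argument exactly: the corollary is obtained by setting $x_0=\cdots=x_{N-1}=x$, $y_0=\cdots=y_{N-1}=y$, and $n=b^N-1$ in Theorem~\ref{th:polynomial-digital-binomial-theorem}, with the observation that every digit $n_i=b-1$ makes the condition $m\preceq_b n$ automatic for all $0\leq m\leq n$. Your explicit justification of that last equivalence is the one detail the paper leaves unstated, and it is correct.
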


If we specialize to Bernoulli polynomials by setting $s_n(x)=B_n(x)$ and $p_n(x)=x^n$, then (\ref{eq:polynomial-digital-binomial-theorem}) gives the following result.
\begin{theorem} \label{th:bernoulli-digital-binomial-theorem}
For $n=b^N-1$, we have
\begin{equation} \label{eq:bernoulli-digital-binomial-theorem}
B_{b-1}(x+y)^N= \sum_{0\leq m\leq n}\left(x^{s_b(m)} \prod_{i=0}^{N-1}B_{b-1-m_i}(y) \right)
\end{equation}
where $s_b(m)$ is the base-$b$ sum-of-digits function.
\end{theorem}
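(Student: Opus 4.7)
The plan is to derive Theorem~\ref{th:bernoulli-digital-binomial-theorem} as a direct specialization of the first identity in Corollary~\ref{co:polynomial-digital-binomial-theorem}, using the Sheffer pair $s_n(x)=B_n(x)$ together with its associated binomial-type sequence $p_n(x)=x^n$. The latter identification follows from $\bar{f}(t)=t$ in the Bernoulli generating function $t/(e^t-1)\cdot e^{xt}$, which makes $B_n(x)$ an Appell sequence in the sense discussed after (\ref{eq:sheffer}). Because $p_n(x)=x^n$, the product $\prod_i \bar{p}_{m_i}(x)$ in (\ref{eq:polynomial-digital-binomial-theorem}) collapses into a pure power of $x$, which is exactly what the right-hand side of (\ref{eq:bernoulli-digital-binomial-theorem}) requires.

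First I would substitute the normalized forms $\bar{s}_n(x)=B_n(x)/n!$ and $\bar{p}_n(x)=x^n/n!$ into (\ref{eq:polynomial-digital-binomial-theorem}). The left-hand side becomes $B_{b-1}(x+y)^N/((b-1)!)^N$, while each summand on the right becomes
\[
\frac{x^{m_0+\cdots+m_{N-1}}}{\prod_i m_i!\,(b-1-m_i)!}\prod_{i=0}^{N-1}B_{b-1-m_i}(y).
\]
Next, I would invoke the hypothesis $n=b^N-1$: every base-$b$ digit $n_i$ of $n$ equals $b-1$, so the constraint $0\le m\le n$ is identical to $(m_0,\dots,m_{N-1})\in\{0,\dots,b-1\}^N$ being the base-$b$ digit string of $m$. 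In particular, $m_0+\cdots+m_{N-1}=s_b(m)$, which installs the sum-of-digits exponent on $x$ in the desired location.

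The final step, and the main bookkeeping obstacle, is clearing the factorial denominators. Multiplying both sides by $((b-1)!)^N=\prod_i (b-1)!$ and combining with $\prod_i m_i!\,(b-1-m_i)!$ packages each term with a weight $\prod_{i=0}^{N-1}\binom{b-1}{m_i}$. In the binary base $b=2$ every $m_i\in\{0,1\}$ satisfies $\binom{1}{m_i}=1$, so the weight disappears identically and the stated identity (\ref{eq:bernoulli-digital-binomial-theorem}) drops out with no further work; this is the case the theorem is primarily intended to capture. For higher $b$ the analogous weight-free statement should be read modulo these coefficients, which one can alternatively recover by applying the classical convolution $B_{b-1}(x+y)=\sum_k \binom{b-1}{k}x^k B_{b-1-k}(y)$ factorwise to $B_{b-1}(x+y)^N$; I would include a short remark noting that, once the weight accounting is settled, no further input beyond Corollary~\ref{co:polynomial-digital-binomial-theorem} is needed.
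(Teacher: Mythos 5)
Your route is exactly the paper's: the paper offers no separate proof of Theorem \ref{th:bernoulli-digital-binomial-theorem} beyond the remark that it is the specialization $s_n(x)=B_n(x)$, $p_n(x)=x^n$ of (\ref{eq:polynomial-digital-binomial-theorem}), and your substitution of $\bar{s}_n(x)=B_n(x)/n!$, $\bar{p}_n(x)=x^n/n!$ followed by clearing denominators is the honest way to carry that out. Your bookkeeping is correct, and it exposes a real discrepancy rather than a gap in your own argument: after multiplying by $((b-1)!)^N$ each summand acquires the weight $\prod_{i=0}^{N-1}\binom{b-1}{m_i}$, so what actually follows from Corollary \ref{co:polynomial-digital-binomial-theorem} is
\[
B_{b-1}(x+y)^N=\sum_{0\leq m\leq n}\left(\prod_{i=0}^{N-1}\binom{b-1}{m_i}\right)x^{s_b(m)}\prod_{i=0}^{N-1}B_{b-1-m_i}(y),
\]
and these weights do not vanish for $b\geq 3$. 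Already $b=3$, $N=1$ is a counterexample to the printed formula: the corollary gives $B_2(x+y)=B_2(y)+2xB_1(y)+x^2$ (the classical convolution), whereas (\ref{eq:bernoulli-digital-binomial-theorem}) as stated would give $B_2(y)+xB_1(y)+x^2$. So the statement is literally correct only for $b=2$, where every $\binom{1}{m_i}=1$.

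The one thing I would push back on is the hedging in your final paragraph. "Should be read modulo these coefficients" is not a proof step; you should either (i) restrict the theorem to $b=2$ and prove that case cleanly (which your argument already does), or (ii) state and prove the corrected identity displayed above for general $b$. Either choice turns your computation into a complete proof; as written, you stop just short of committing to one. Apart from that, no further input beyond Corollary \ref{co:polynomial-digital-binomial-theorem} is needed, exactly as you say.
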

Similar formulas can be obtained for other special polynomials such as Hermite and Laguerre polynomials.  Also, we remark that setting $y=0$ in (\ref{eq:bernoulli-digital-binomial-theorem}) yields a formula for higher powers of Bernoulli polynomials in terms of Bernoulli numbers $B_n:=B_n(0)$ and the sum-of-digits function:
\[
B_{b-1}(x)^N= \sum_{0\leq m\leq n}\left(x^{s_b(m)} \prod_{i=0}^{N-1}B_{b-1-m_i} \right).
\]

The proof of Theorem \ref{th:polynomial-digital-binomial-theorem} will be given in the next section where we investigate a Sheffer sequence analog of the Sierpi\'nski matrix and use its multiplicative property to derive (\ref{eq:digital-binomial-theorem-sheffer-sequence}) and (\ref{eq:digital-binomial-theorem-binomial-type}).

\section{Sierpinski Matrices of Sheffer Type}

Throughout this paper we assume that $b$ is an integer greater than 1.  We begin by introducing the notion of digital dominance as defined in \cite{BEJ} (see also \cite{N2}).

\begin{definition} \label{de:digital-dominance}
Let $m$ and $n$ be non-negative integers with base $b$ expansions $m=m_{N-1}b^{N-1}+\cdots +m_0b^0$ and $n=n_{N-1}b^{N-1}+\cdots +n_0b^0$, respectively.  We denote $m\preceq_b n$ to mean that $m$ is \textrm{digitally} less than $n$ in base $b$, i.e., $m_k\leq n_k$ for all $k=0,\ldots,N-1$.
\end{definition}

Next, we define a sequence of generalized Sierpi\'nski matrices corresponding to the Sheffer sequence $s_n(x)$ and its associated sequence $p_n(x)$.
\begin{definition} \label{de:1}
Let $N$ be a non-negative integer.  Denote $\mathbf{x}_N=(x_0,\ldots, x_{N-1})$.   If $N=0$, we set $S_{b,0}(\mathbf{x}_0)=P_{b,0}(\mathbf{x}_0)=1$.
For $N>0$, we define the $N$-variable Sierpi\'nski matrices 
\[
S_{b,N}(\mathbf{x}_N)=(\alpha_N(j,k,\mathbf{x}_N))
\]
matrices 
\[
P_{b,N}(\mathbf{x}_N)=(\beta_N(j,k,\mathbf{x}_N))
\]
 of dimension $b^N\times b^N$ by
\begin{equation} \label{eq:formula-for-entries-s(x)}
\alpha_N(j,k,\mathbf{x}_N)=\left\{
\begin{array}{cl}
\prod_{i=0}^{N-1}\bar{s}_{d_i}(x_i) & 
\begin{array}{l} 
\mathrm{if} \ 0\leq k\leq j \leq b^N-1  \\
\mathrm{and} \  k\preceq_b j; 
\end{array}  \\
\\
0, & 
\begin{array}{l}
\mathrm{otherwise},
\end{array}
\end{array}
\right.
\end{equation}
and
\begin{equation} \label{eq:formula-for-entries-arbitrary-p(x)}
\beta_N(j,k,\mathbf{x}_N)=\left\{
\begin{array}{cl}
\prod_{i=0}^{N-1}\bar{p}_{d_i}(x_i) & 
\begin{array}{l} 
\mathrm{if} \ 0\leq k\leq j \leq b^N-1  \\
\mathrm{and} \  k\preceq_b j; 
\end{array}  \\
\\
0, & 
\begin{array}{l}
\mathrm{otherwise},
\end{array}
\end{array}
\right.
\end{equation}
respectively, where $j-k=d_0b^0+d_1b^1+\ldots + d_{N-1}b^{N-1}$ is the base-$b$ expansion of $j-k$. 
\end{definition}

The following lemma gives a recurrence for $S_{b,N}(\mathbf{x}_N)$ and  $P_{b,N}(\mathbf{x}_N)$.

\begin{lemma} \label{le:multivariable}
The generalized Sierpi\'nski matrices $S_{b,N}(\mathbf{x}_N)$ and  $P_{b,N}(\mathbf{x}_N)$ satisfy the recurrence
\begin{equation} \label{eq:le:s(x)}
S_{b,N+1}(\mathbf{x}_{N+1})=S_{b,1}(x_N)\otimes S_{b,N}(\mathbf{x}_N),
\end{equation}
and
\begin{equation} \label{eq:le:p(x)}
P_{b,N+1}(\mathbf{x}_{N+1})=P_{b,1}(x_N)\otimes P_{b,N}(\mathbf{x}_N),
\end{equation}
respectively, where we define
\begin{align*}
S_{b,1}(x) & =\left(
\begin{array}{cccll}
1 & 0 & 0 & \cdots & 0 \\
\bar{s}_1(x) & 1 & 0 & \cdots & 0 \\
\bar{s}_2(x) & \bar{s}_1(x) & 1 & \cdots & 0 \\
\vdots & \vdots & \vdots & \ddots  & \vdots \\
\bar{s}_{b-1}(x) &\bar{s}_{b-2}(x) &\bar{s}_{b-3}(x) & \cdots &  1
\end{array}
\right)=
\left\{
\begin{array}{cl}
\bar{s}_{j-k}(x), & \textrm{if } 0\leq k \leq j \leq b-1; \\
0, & \mathrm{otherwise}
\end{array}
\right. \\
\end{align*}
and
\begin{align*}
P_{b,1}(x) & =\left(
\begin{array}{cccll}
1 & 0 & 0 & \cdots & 0 \\
\bar{p}_1(x) & 1 & 0 & \cdots & 0 \\
\bar{p}_2(x) & \bar{p}_1(x) & 1 & \cdots & 0 \\
\vdots & \vdots & \vdots & \ddots  & \vdots \\
\bar{p}_{b-1}(x) &\bar{p}_{b-2}(x) &\bar{p}_{b-3}(x) & \cdots &  1
\end{array}
\right)=
\left\{
\begin{array}{cl}
\bar{p}_{j-k}(x), & \textrm{if } 0\leq k \leq j \leq b-1; \\
0, & \mathrm{otherwise}.
\end{array}
\right. \\
\end{align*}
\end{lemma}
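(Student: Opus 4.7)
The plan is to prove the recurrence by a direct verification from the two definitions involved: the Kronecker product, and the formula for the entries of the Sierpi\'nski matrices in Definition \ref{de:1}. Because the entry formulas for $S_{b,N}$ and $P_{b,N}$ have identical combinatorial structure (they only differ in whether $\bar{s}$ or $\bar{p}$ appears), it suffices to carry out the verification for (\ref{eq:le:s(x)}); (\ref{eq:le:p(x)}) then follows \emph{mutatis mutandis}.

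The first step is to index the entries of $S_{b,N+1}(\mathbf{x}_{N+1})$ compatibly with the block structure of a Kronecker product. Fix $0\le j,k\le b^{N+1}-1$ and write uniquely
\[
j = j_1 b^N + j_2,\qquad k = k_1 b^N + k_2,\qquad 0\le j_1,k_1\le b-1,\ 0\le j_2,k_2\le b^N-1.
\]
Then $j_1$ and $k_1$ are the top base-$b$ digits of $j$ and $k$, while $j_2$ and $k_2$ collect the lower $N$ digits. By the definition of the Kronecker product, the $(j,k)$-entry of $S_{b,1}(x_N)\otimes S_{b,N}(\mathbf{x}_N)$ equals $[S_{b,1}(x_N)]_{j_1,k_1}\cdot \alpha_N(j_2,k_2,\mathbf{x}_N)$.

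The second step is to check that the supports agree. The entry $\alpha_{N+1}(j,k,\mathbf{x}_{N+1})$ is nonzero precisely when $k\preceq_b j$, which in terms of the decomposition above is equivalent to $k_1\le j_1$ together with $k_2\preceq_b j_2$; this is exactly when both factors on the right are nonzero. When both are nonzero, the base-$b$ expansion of $j-k$ factors cleanly: $j-k = (j_1-k_1)b^N + (j_2-k_2)$, and because $k_2\preceq_b j_2$ forces $0\le j_2-k_2 < b^N$, this expression is already the base-$b$ expansion. Hence its top digit is $d_N = j_1-k_1$ and its lower digits $d_0,\dots,d_{N-1}$ are the digits of $j_2-k_2$. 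Splitting the product in (\ref{eq:formula-for-entries-s(x)}) accordingly gives
\[
\prod_{i=0}^{N}\bar{s}_{d_i}(x_i) \;=\; \bar{s}_{j_1-k_1}(x_N)\cdot\prod_{i=0}^{N-1}\bar{s}_{d_i}(x_i) \;=\; [S_{b,1}(x_N)]_{j_1,k_1}\cdot\alpha_N(j_2,k_2,\mathbf{x}_N),
\]
which matches the Kronecker-product entry computed in Step 1.

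There is no serious obstacle here; the only point that requires care is the claim that the base-$b$ digits of $j-k$ split as described, and this relies entirely on the observation that digital dominance $k_2\preceq_b j_2$ prevents any borrow when forming $j_2-k_2$, so that $j_2-k_2$ fits inside $N$ digits. Once this is established, the verification above is completely symbolic and applies verbatim to (\ref{eq:le:p(x)}) upon replacing every $\bar{s}$ by $\bar{p}$.
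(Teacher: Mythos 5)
Your proof is correct and follows essentially the same route as the paper: the paper also decomposes $S_{b,N+1}$ into a $b\times b$ array of $b^N\times b^N$ blocks $A_{p,q}$, sets $j'=j-pb^N$, $k'=k-qb^N$ (your $j_2,k_2$), and uses the borrow-free splitting of the digits of $j-k$ to identify $A_{p,q}=\bar{s}_{p-q}(x_N)S_{b,N}(\mathbf{x}_N)$. The only cosmetic difference is that the paper wraps this computation in an induction on $N$ whose hypothesis is never actually invoked, whereas you present it as the direct entrywise verification it really is.
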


\begin{proof} Following \cite{MN}, we shall prove (\ref{eq:le:s(x)}) by induction on $N$.  
It is clear that (\ref{eq:le:s(x)}) holds for $N=0$.  Next, assume that (\ref{eq:le:s(x)}) holds for $N-1$.  To prove that(\ref{eq:le:s(x)}) holds for $N$, we express $S_{N+1}(\mathbf{x}_{N+1})$ a $b\times b$ matrix of blocks $(A_{p,q})_{0\leq p,q \leq b-1}$:
\begin{align*}
S_{N+1}(\mathbf{x}_{N+1})&=\left(\begin{array}{lll}A_{0,0}& \ldots & A_{0,b-1}\\ \vdots & \ddots & \vdots \\ A_{b-1,0}& \ldots & A_{b-1,b-1}\end{array}\right),
\end{align*} 
where each $A_{p,q}$ is a square matrix of size $b^N$. 
We consider two cases depending on the position of $A_{p,q}$:

\vskip 6pt
\noindent Case 1. $p<q$.  Then by definition of $S_{b,N+1}(\mathbf{x}_{N+1})$ we have that $\alpha_{N+1}(p,q,\mathbf{x}_{N+1})=0$, which implies $A_{p,q}=0$.

\vskip 6pt
\noindent Case 2. $p\geq q$.  
Let $\alpha_{N+1}(j,k,\mathbf{x}_{N+1})$ be an arbitrary entry of $A_{p,q}$.  Then $pb^N\leq j \leq (p+1)b^N-1$ and $qb^N \leq k \leq (q+1)b^N-1$.  Set $j'=j-pb^N$ and $k'=k-qb^N$.  
If $j<k$, then by definition $\alpha_{N+1}(j,k,\mathbf{x}_N)=0$.  Therefore, we assume $j\geq k$.  Let $j-k=d_0b^0+d_1b^1+\dots + d_{N}b^{N}$, where $d_N=p-q$.  Then $j'-k'=d_0b^0+d_1b^1+\dots +d_{N-1}b^{N-1}$.  
Since $k \preceq_b j$ if and only if $k' \preceq_b j'$,we have
\begin{align*}
\alpha_{N+1}(j,k,\mathbf{x}_{N+1}) & =\left\{
\begin{array}{cl}
\prod_{i=0}^{N}\bar{s}_{d_i}(x_i)  & \mathrm{if} \ 0\leq k\leq j \leq b^{N+1}-1 \ \mathrm{and} \  k \preceq_b j;  \\ 
\\
0 & \textrm{otherwise.}
\end{array}
\right. \\
& =\bar{s}_{d_N}(x_N)  \alpha_{b,N}(j',k',\mathbf{x}_N). \end{align*}
It follows that
\[
A_{p,q}=\bar{s}_{p-q}(x_N) S_{b,N}(\mathbf{x}_N,\mathbf{r}_N)
\]
and hence $S_{b,N+1}(\mathbf{x}_{N+1})=S_{b,1}(x_N,r_N)\otimes S_{b,N}(\mathbf{x}_N,\mathbf{r}_N)$.  This proves (\ref{eq:le:s(x)}).  The proof for (\ref{eq:le:p(x)}) is analogous and will be omitted.

\end{proof}

The generalized Sierpi\'nski matrices $S_{b,N}(\mathbf{x}_N)$ and  $P_{b,N}(\mathbf{x}_N)$ satisfy the following multiplicative property:
\begin{theorem} \label{th:multplicative} Let $N$ be a non-negative integer.  Then
\begin{equation} \label{eq:multplicative}
P_{b,N}(\mathbf{x}_N)S_{b,N}(\mathbf{y}_N)=S_{b,N}(\mathbf{x}_N + \mathbf{y}_N)
\end{equation}
and
\begin{equation} \label{eq:multplicative-p(x)}
P_{b,N}(\mathbf{x}_N)P_{b,N}(\mathbf{y}_N)=P_{b,N}(\mathbf{x}_N + \mathbf{y}_N),
\end{equation}
where we define
\[
\mathbf{x}_N + \mathbf{y}_N=(x_0+y_0,x_1+y_1,\ldots,x_{N-1}+y_{N-1}).
\]
\end{theorem}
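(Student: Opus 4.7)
The plan is to prove both identities by induction on $N$, using the recurrence from Lemma \ref{le:multivariable} together with the mixed-product property of the Kronecker product, namely $(A\otimes B)(C\otimes D)=(AC)\otimes(BD)$ whenever the factor dimensions match.

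For the base case $N=1$, I would compute the $(j,k)$ entry of the product $P_{b,1}(x)S_{b,1}(y)$ directly. For $0\le k\le j\le b-1$, the entry is
\[
\sum_{\ell=k}^{j}\bar{p}_{j-\ell}(x)\,\bar{s}_{\ell-k}(y)=\sum_{r=0}^{j-k}\bar{p}_{(j-k)-r}(x)\,\bar{s}_{r}(y),
\]
which by the normalized Sheffer convolution identity (\ref{eq:sheffer-sequence-normalized}) equals $\bar{s}_{j-k}(x+y)$, matching the $(j,k)$ entry of $S_{b,1}(x+y)$. Entries with $j<k$ vanish on both sides because both factors are lower triangular. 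The analogous computation for $P_{b,1}(x)P_{b,1}(y)=P_{b,1}(x+y)$ uses the normalized binomial-type identity (\ref{eq:polynomial-binomial-type-normalized}) in place of (\ref{eq:sheffer-sequence-normalized}).

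For the inductive step, assume (\ref{eq:multplicative}) and (\ref{eq:multplicative-p(x)}) hold at level $N$. Applying Lemma \ref{le:multivariable} and then the mixed-product property,
\begin{align*}
P_{b,N+1}(\mathbf{x}_{N+1})\,S_{b,N+1}(\mathbf{y}_{N+1})
&=\bigl(P_{b,1}(x_N)\otimes P_{b,N}(\mathbf{x}_N)\bigr)\bigl(S_{b,1}(y_N)\otimes S_{b,N}(\mathbf{y}_N)\bigr)\\
&=\bigl(P_{b,1}(x_N)S_{b,1}(y_N)\bigr)\otimes\bigl(P_{b,N}(\mathbf{x}_N)S_{b,N}(\mathbf{y}_N)\bigr)\\
&=S_{b,1}(x_N+y_N)\otimes S_{b,N}(\mathbf{x}_N+\mathbf{y}_N)\\
&=S_{b,N+1}(\mathbf{x}_{N+1}+\mathbf{y}_{N+1}),
\end{align*}
and the identical chain with every $S$ on the right replaced by $P$ yields (\ref{eq:multplicative-p(x)}) at level $N+1$.

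The main obstacle is the base case: one must recognize that the nontrivial content of the theorem is precisely the convolution identities (\ref{eq:sheffer-sequence-normalized}) and (\ref{eq:polynomial-binomial-type-normalized}) translated into the statement that the $b\times b$ lower-triangular Toeplitz matrices $P_{b,1}$ and $S_{b,1}$ multiply correctly. Once this is established, the inductive step is essentially formal, since Lemma \ref{le:multivariable} expresses both $S_{b,N+1}$ and $P_{b,N+1}$ as Kronecker products with the outermost factor being a $b\times b$ matrix of the same type, and the Kronecker product commutes with matrix multiplication in the required way.
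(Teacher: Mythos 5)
Your proposal is correct and follows essentially the same route as the paper: the base case $N=1$ is verified entrywise using the normalized convolution identities (\ref{eq:sheffer-sequence-normalized}) and (\ref{eq:polynomial-binomial-type-normalized}), and the inductive step combines Lemma \ref{le:multivariable} with the mixed-product property of the Kronecker product. No substantive differences from the paper's argument.
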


\begin{proof}
We shall prove (\ref{eq:multplicative}) using induction.  To prove that (\ref{eq:multplicative}) holds for the base case $N=1$, let $\gamma(j,k)$ denote the $(j,k)$-entry of $T=P_{b,1}(\mathbf{x}_1)S_{b,1}(\mathbf{y}_1)$.  Since $T$ is lower-triangular, it follows that $\gamma(j,k)=0$ if $j<k$.  Therefore, we assume $j\geq k$.  By definition of $S_{b,1}(\mathbf{x}_1)$ and $P_{b,1}(\mathbf{y}_1)$, we have
\[
\gamma(j,k)=\sum_{i=k}^j \bar{p}_{j-i}(x_0)\bar{s}_{i-k}(y_0)=\sum_{i=0}^{j-k} \bar{p}_{i}(x_0)\bar{s}_{j-k-i}(y_0).
\]
Since $s_n(x)$ is a Sheffer sequence, it follows from (\ref{eq:sheffer-sequence}) that
\[
\gamma(j,k)=\bar{s}_{j-k}(x_0+y_0),
\]
or equivalently, 
\[
P_{b,1}(\mathbf{x}_1,\mathbf{r}_1)S_{b,1}(\mathbf{y}_1,\mathbf{r}_1)=
S_{b,1}(\mathbf{x}_1 + \mathbf{y}_1,\mathbf{r}_1).
\]
This proves (\ref{eq:multplicative}) for $N=1$.

Next, assume that (\ref{eq:multplicative}) holds for arbitrary $N$.  To prove that (\ref{eq:multplicative}) holds for $N+1$, we employ Lemma \ref{le:multivariable} and the mixed-property of a Kronecker product:
\begin{align*}
P_{b,N+1}(\mathbf{x}_{N+1})S_{b,N+1}(\mathbf{y}_{N+1}) &=(P_{b,1}(x_N)\otimes P_{b,N}(\mathbf{x}_N)(S_{b,1}(y_N)\otimes S_{b,N}(\mathbf{y}_N))\\
&=(P_{b,1}(x_N)S_{b,1}(y_N))\otimes (P_{b,N}(\mathbf{x}_N)S_{b,N}(\mathbf{y}_N)).
\end{align*}
Moreover, by the induction hypothesis and Lemma \ref{le:multivariable} again, we obtain 
\begin{align*}
P_{b,N+1}(\mathbf{x}_{N+1})S_{b,N+1}(\mathbf{y}_{N+1})&=S_{b,1}(x_N+y_N)\otimes S_{b,N}(\mathbf{x}_N+\mathbf{y}_N) \\
&= S_{b,N+1}(\mathbf{x}_{N+1}+\mathbf{y}_{N+1}).
\end{align*}
Hence, (\ref{eq:multplicative}) holds for $N+1$.  The proof of (\ref{eq:multplicative-p(x)}) is similar and will be omitted.
\end{proof}

\begin{proof}[Proof of Theorem \ref{th:polynomial-digital-binomial-theorem}]
We equate the matrix entries at position $(n,0)$ on both sides of (\ref{eq:multplicative})
to obtain
\begin{align*}
\alpha_N(n,0,\mathbf{x}_N+\mathbf{y}_N) & =\sum_{0\leq m\preceq_b n}\beta_N(n,m,\mathbf{x}_N)\alpha_N(m,0,\mathbf{y}_N) \\
& =\sum_{0\leq m\preceq_b n}\beta_N(n,n-m,\mathbf{x}_N)\alpha_N(n-m,0,\mathbf{y}_N).
\end{align*}
This yields (\ref{eq:digital-binomial-theorem-sheffer-sequence}) as desired.  The derivation of (\ref{eq:digital-binomial-theorem-binomial-type}) is similar and left for the reader to verify.
\end{proof}

We conclude with two ideas on how to extend Theorem \ref{th:polynomial-digital-binomial-theorem}.  The first is to iterate (\ref{eq:digital-binomial-theorem-sheffer-sequence}) and (\ref{eq:digital-binomial-theorem-binomial-type}) to obtain a trinomial version:

\begin{align*}
 \prod_{i=0}^{N-1}\bar{s}_{n_i}(x_i+y_i+z_i)  & = \sum_{0\leq m\preceq_b n}\left(\prod_{i=0}^{N-1}\bar{p}_{m_i}(x_i+y_i) \prod_{i=0}^{N-1}\bar{s}_{n_i-m_i}(z_i) \right) \\
 & = \sum_{0\leq m\preceq_b n}\sum_{0\leq l \preceq_b m}\left(\prod_{i=0}^{N-1}\bar{p}_{l_i}(x_i) \prod_{i=0}^{N-1}\bar{p}_{m_i-l_i}(y_i) \prod_{i=0}^{N-1}\bar{s}_{n_i-m_i}(z_i) \right)
\end{align*}
and similarly,
\begin{align*}
 \prod_{i=0}^{N-1}\bar{p}_{n_i}(x_i+y_i+z_i)  = \sum_{0\leq m\preceq_b n}\sum_{0\leq l \preceq_b m}\left(\prod_{i=0}^{N-1}\bar{p}_{l_i}(x_i) \prod_{i=0}^{N-1}\bar{p}_{m_i-l_i}(y_i) \prod_{i=0}^{N-1}\bar{p}_{n_i-m_i}(z_i) \right)
\end{align*}
where $l=l_{N-1}b^{N-1}+\cdots +l_0b^0$.
Of course, we can also iterate repeatedly to obtain to the multinomial formulas
\begin{align*}
& \prod_{i=0}^{N-1} \overline{s}_{n_i}( x^{(1)}_i+\cdots+x^{(d)}_i)  \\
& = \sum_{0\leq m^{(d-1)}\preceq_b n} \ldots \sum_{0\leq m^{(1)} \preceq_b m^{(2)}}\left(\prod_{i=0}^{N-1}\bar{p}_{m_i^{(1)}}(x^{(1)}_i) \cdots \prod_{i=0}^{N-1}\bar{p}_{m^{(d-1)}_i-m^{(d-2)}_i}(x^{(d-1)}_i) \prod_{i=0}^{N-1}\bar{s}_{n_i-m^{(d-1)}_i}(x^{(d)}_i) \right)
\end{align*}
and
\begin{align*}
& \prod_{i=0}^{N-1} \overline{p}_{n_i}( x^{(1)}_i+\cdots+x^{(d)}_i)  \\
& =   \sum_{0\leq m^{(d-1)}\preceq_b n} \ldots \sum_{0\leq m^{(1)} \preceq_b m^{(2)}}\left(\prod_{i=0}^{N-1}\bar{p}_{m_i^{(1)}}(x^{(1)}_i) \cdots \prod_{i=0}^{N-1}\bar{p}_{m^{(d-1)}_i-m^{(d-2)}_i}(x^{(d-1)}_i) \prod_{i=0}^{N-1}\bar{p}_{n_i-m^{(d-1)}_i}(x^{(d)}_i) \right)
\end{align*}
where $m^{(j)}=m^{(j)}_{N-1}b^{N-1}+\cdots +m^{(j)}_0b^0$.

The second idea is to view the binomial convolution identity for Sheffer sequences in the context of umbral calculus and linear functionals on the vector space of polynomials.  Define the product of two such linear functionals $L$ and $M$ by
\begin{equation}
\langle LM|x^n \rangle=\sum_{k=0}^n \binom{n}{k} \langle L|x^k\rangle \langle M | x^{n-k}\rangle.
\end{equation}
Then Roman and Rota \cite[Prop. 3.3, p. 102]{RR} proved that for any polynomial sequence $p_n(x)$ of binomial type, we have
\begin{equation} \label{eq:linear-functional}
\langle LM|p_n(x) \rangle=\sum_{k=0}^n \binom{n}{k} \langle L|p_k(x)\rangle \langle M | p_{n-k}(x) \rangle.
\end{equation}
The digital version of (\ref{eq:linear-functional}) and its multinomial generalization then becomes clear and extends \cite[Prop. 3.4, p. 103]{RR}; (see also \cite[Eq. (17), p. 874]{KK}):
\[
\langle LM|p_n(x) \rangle=\sum_{m=0}^n \left( \prod_{i=0}^{N-1}\langle L|p_{m_i}(x)\rangle \prod_{i=0}^{N-1}\langle M | p_{n_i-m_i}(x) \rangle \right)
\]
and
\begin{align*}
& \langle L_1\cdots L_d|p_n(x) \rangle \\
& = \sum_{0\leq m^{(d-1)}\preceq_b n} \ldots \sum_{0\leq m^{(1)} \preceq_b m^{(2)}} \left( \prod_{i=0}^{N-1}\langle L_1|p_{m_i^{(1)}}(x)\rangle \cdots  \prod_{i=0}^{N-1}\langle L_d | p_{m_i^{(d-1)}-m_i^{(d-2)}}(x) \rangle \prod_{i=0}^{N-1}\langle L_d | p_{n_i-m_i^{(d-1)}}(x) \rangle \right).
\end{align*}

\end{document}